 \newtheorem{thm}{Theorem}[section]
 \newtheorem{lem}[thm]{Lemma}
 \newtheorem{prop}[thm]{Proposition}
 \newtheorem{cor}[thm]{Corollary}
 \newtheorem{rem}[thm]{Remark}
 \newcommand{\bthm}{\begin{thm}}
 \newcommand{\ethm}{\end{thm}}
 \newcommand{\bd}{\begin{defin}}
 \newcommand{\ed}{\end{defin}}
 \newcommand{\blem}{\begin{lem}}
 \newcommand{\elem}{\end{lem}}
 \newcommand{\bcor}{\begin{cor}}
 \newcommand{\ecor}{\end{cor}}
 \newcommand{\bprop}{\begin{prop}}
 \newcommand{\eprop}{\end{prop}}
 \newcommand{\brem}{\begin{rem} \rm}
 \newcommand{\erem}{\end{rem}}
 \newcommand{\bex}{\begin{ex} \rm}
 \newcommand{\eex}{\end{ex}}
 \newcommand{\beq}{\begin{equation}}
 \newcommand{\eeq}{\end{equation} }
 \newcommand{\bea}{\begin{eqnarray}}
 \newcommand{\eea}{\end{eqnarray}}
 \newcommand{\beas}{\begin{eqnarray*}}
 \newcommand{\eeas}{\end{eqnarray*}}
 \newcommand{\beqs}{\begin{equation*}}
 \newcommand{\eeqs}{\end{equation*}}
 \newcommand{\bi}{\begin{itemize}}
 \newcommand{\ei}{\end{itemize}}
 \newcommand{\ben}{\begin{enumerate}}
 \newcommand{\een}{\end{enumerate}}
 \newcommand{\ba}{\begin{array}}
 \newcommand{\ea}{\end{array}}
\newcommand{\NN}{\mathbb N}
\newcommand{\RR}{\mathbb R}
\newcommand{\DD}{\mathcal D}
\newcommand{\SSS}{\mathcal S}
\begin{document}

\title[Extension theorem of Whitney type for $\mathcal S(\mathbb{R}_+^d)$]{Extension theorem of Whitney type for $\mathcal S(\mathbb{R}_+^d)$ by the use of the Kernel Theorem}

\author{Smiljana Jak\v si\'c}
           \address{Faculty of Forestry, Belgrade University, Kneza Vi\v seslava 1, Belgrade, Serbia \\
              Tel.: +381-11-305398,
              Fax: +381-11-3053988}
           \email{smiljana.jaksic@sf.bg.ac.rs}

\author{Bojan Prangoski}
           \address{Faculty of Mechanical Engineering, University Ss.
           Cyril and Methodius, Karpos II bb, 1000 Skopje, Macedonia}
           \email{bprangoski@yahoo.com}

\begin{abstract}
We study the expansions of the elements in $\mathcal
S(\mathbb{R}_+^d)$ and $\mathcal{S}'(\mathbb{R}_+^d)$ with respect
to the Laguerre orthonormal basis, extending the result of M.
Guillemont-Teissier \cite{Mari} in the case $d=1$. As a
consequence, we obtain the Schwartz kernel theorem for
$\mathcal{S}(\mathbb{R}_+^d)$ and $\mathcal{S}'(\mathbb{R}_+^d)$ and the extension theorem of Whitney type for $\mathcal{S}(\mathbb{R}_+^d)$.
\end{abstract}

\maketitle

 \section{Introduction}

We denote by $\RR^d_+$ the set $(0,\infty)^d$ and by $\overline{\RR^d_+}$ its closure, i.e. $[0,\infty)^d$. We will
consider the space $\SSS(\mathbb{R}_+^d)$  which consists of all $f\in \mathcal{C}^{\infty}(\RR^d_+)$ such that all derivatives $D^pf$, $p\in\NN^d_0$, extend to continuous functions on $\overline{\RR^d_+}$ and
$$\sup_{x\in\mathbb{R}^d_+}x^k|D^pf(x)|<\infty\;,\forall k,p\in\mathbb{N}_0^d.$$
With this system of seminorms $\mathcal S(\mathbb{R}_+^d)$ becomes an $(F)$-space.

The results concerning the extension of a smooth function or a
function of class $\mathcal{C}^k$ out of some region and various
reformulation of such problems are called extension theorems of
Whitney type. One can see Whitney \cite{Whitney}, Seeley
\cite{Seeley} and Hörmander \cite[Theorem 2.3.6, p. 48]{Hermander}. Here we deal with a
problem of extension of a function from $\mathcal
S(\mathbb{R}_+^d)$ onto $\mathcal S(\mathbb{R}^d)$. Theorem
\ref{repofsupp} is the main result of the paper. For the purpose
of this theorem we prove the Schwartz kernel theorem for
$\mathcal{S}(\mathbb{R}_+^d)$ and $\mathcal{S}'(\mathbb{R}_+^d)$,
Theorem \ref{kernel thm}.

Recall, for $n=0,1,2...$ the functions
$$L_n(x)=\frac{e^x}{n!}\Big(\frac{d}{dx}\Big)^n(e^{-x}x^n),\quad x>0$$
are the Laguerre polynomials and
$\mathcal{L}_n(x)=L_n(x)e^{-\frac{x}{2}}$ are the Laguerre
functions; $\{\mathcal{L}_n(x)$, $n=0,1,...\}$ is an orthonormal
basis for $L^2(0,\infty)$ (\cite{Szego} p.108).

The problem of expanding the elements of $\mathcal
S'(\mathbb{R}_+)$ with respect to the Laguerre orthonormal basis
has been treated by  Guillemont-Teissier in \cite{Mari} and Duran
in \cite{D}:

\noindent If $T\in\mathcal S'(\mathbb{R}_+)$ and $a_n=\langle
T,\mathcal{L}_n(x)\rangle$ then
$T=\sum\limits_{n=0}^{\infty}a_n\mathcal L_n(x)$ and
$\{a_n\}_{n=0}^\infty$ decreases slowly. Conversely, if
$\{a_n\}_{n=0}^\infty$ decreases slowly, then there exists
$T\in\mathcal S'(\mathbb{R}_+)$ such that
$T=\sum\limits_{n=0}^{\infty}a_n\mathcal L_n(x)$.

The works \cite{SP1}, \cite{Zayed} and \cite{Zemanian} contain expansions of the same kind as in \cite{Mari} and \cite{D}.

The novelty of this paper is the extension of the results of
\cite{Mari} for the $d$ dimensional case. This leads to the
Schwartz kernel theorem (Theorem \ref{kernel thm}) which states
that there is one-to-one correspondence between elements from
$\mathcal{S}'(\mathbb{R}^{m+n}_+)$ in two sets of variables $x$
and $y$ and the continuous linear mappings of
$(\mathcal{S}(\mathbb{R}^m_+))_y$ into
$(\mathcal{S}'(\mathbb{R}^m_+))_x$. As a consequence of Theorem
\ref{repofsupp} we explain the convolution in $\SSS'(\RR^d_+)$ in
the last remark.

The plan of the paper is as follows. We recall in section $3$ some
properties of the Laguerre series and we prove the convergence of
the Laguerre series in $\mathcal{S}(\mathbb{R}_+^d)$ and
$\mathcal{S}'(\mathbb{R}_+^d)$. In section $4$ we state the
Schwartz's kernel theorem for $\mathcal{S}(\mathbb{R}_+^d)$ and we prove the extension theorem of Whitney type for $\mathcal{S}(\mathbb{R}_+^d)$.

 \section{Notation}

We use the standard multi-index notation. Given
$\alpha=(\alpha_1,...,\alpha_d)\in\mathbb{N}_0^d$, we write
$|\alpha|=\sum_{i=1}^d\alpha_i$, $x^{
\alpha}=(x_1,...,x_d)^{(\alpha_1,...,\alpha_d)}=\prod_{i=1}^dx_i^{\alpha_i}$,
$D^\alpha=\prod_{i=1}^d\frac{\partial^{\alpha_i}}{\partial
{x_i}^{\alpha_i}}$ for the partial derivative and $X^\alpha
f(x)=x^\alpha f(x)$ for the multiplication operator. For $x\in\RR^d$, $|x|$ stands for the standard Euclidean norm in $\RR^d$.

Let $s$ be the space of rapidly decreasing sequences

$$\{a_n\}_{n\in\mathbb{N}_0^d}\in s \Leftrightarrow\sum_{n\in\mathbb{N}_0^d}|a_n|^2n^{2k}<\infty,\qquad\forall k\in\mathbb{N}.$$

\noindent Then $s'$ stands for the strong dual of $s$, the space
of slowly increasing sequences

$$\{a_n\}_{n\in\mathbb{N}_0^d}\in s' \Leftrightarrow\sum_{n\in\mathbb{N}_0^d}|a_n|^2n^{-2k}<\infty,\qquad\exists k\in\mathbb{N}.$$

 \section{Laguerre series}

The $d$-dimensional Laguerre functions

$$\mathcal{L}_n(x)=\mathcal{L}_{n_1}(x_1)...\mathcal{L}_{n_d}(x_d)=\prod_{i=1}^d\mathcal{L}_{n_i}(x_i)$$

\noindent form an orthonormal basis for $L^2(\mathbb{R}^d_+)$ and are the eigenfunctions of the Laguerre operator
$E=\left(D_1(x_1D_1)-\frac{x_1}{4}\right)\ldots\left(D_d(x_dD_d)-\frac{x_d}{4}\right)$,
$E:\mathcal{S}(\mathbb{R}_+^d)\rightarrow\mathcal{S}(\mathbb{R}_+^d)$

$$\mathcal{L}_n(x)\rightarrow E(\mathcal{L}_n(x))=\prod_{i=1}^d-(n_i+\frac{1}{2})\mathcal{L}_n(x).$$

\noindent Notice that $E$ is a self-adjoint operator, i.e. \beas
\langle Ef,g\rangle=\langle f,Eg\rangle,\,\,\,
f,g\in\mbox{dom}(E)=\{f\in L^2(\mathbb{R}^d_+);\;Ef\in
L^2(\mathbb{R}^d_+)\}. \eeas For $f\in\mathcal{S}(\mathbb{R}^d_+)$
we define the $n$-th Laguerre coefficient by
$a_n=\int_{\mathbb{R}^d_+} f(x)\mathcal{L}_n(x)dx$. The Laguerre
series of the function $f\in\mathcal{S}(\mathbb{R}^d_+)$ is
$\sum_{n\in\mathbb{N}_0^d}a_n\mathcal{L}_n(x)$.

In \cite{Mari}, p. 547 the following bound on the one-dimensional
Laguerre functions is obtained:

$$\Big|x^k\Big(\frac{d}{dx}\Big)^p\mathcal{L}_n(x)\Big|\leq C_{p,k}(n+1)^{p+k},\;x\geq0,\;n,p,k\geq0.$$

\noindent Finding the bound on the $d$-dimensional Laguerre
functions involves not complicated calculation. Hence:

\begin{equation}\label{bound on Laguerre}
|x^kD^p\mathcal{L}_n(x)|\leq
C_{p,k}\prod_{i=1}^d(n_i+1)^{p_i+k_i},\;x\in\mathbb{R}^d_+,\;n,p,k\in\mathbb{N}_0^d.
\end{equation}

\subsection{Convergence of the Laguerre series in $\mathcal{S}(\mathbb{R}^d_+)$}

\begin{thm}\label{Thm Konvergencija S}
For $f\in\mathcal{S}(\mathbb{R}^d_+)$ let $a_n(f)=\int_{\mathbb{R}^d_+} f(x)\mathcal{L}_n(x)dx$. Then
$f=\sum_{n\in\mathbb{N}_0^d}a_n(f)\mathcal{L}_n$ and the series converges absolutely in $\mathcal{S}(\mathbb{R}^d_+)$. Moreover the mapping $\iota:\mathcal{S}(\mathbb{R}^d_+)\rightarrow s$, $\iota(f)=\{a_n(f)\}_{n\in\NN^d_0}$ is a topological isomorphism.
\end{thm}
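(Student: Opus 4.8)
The plan is to build the candidate inverse of $\iota$ explicitly and to obtain continuity of both maps from two ingredients already available: the self-adjointness of $E$ together with its eigenvalue relation, and the pointwise bound \eqref{bound on Laguerre}. Write $\lambda_n=\prod_{i=1}^d(n_i+\tfrac12)$, so that $E^k\mathcal{L}_n=(-1)^{dk}\lambda_n^k\mathcal{L}_n$ for every $k\in\NN_0$, and note $2^{-d}\prod_{i=1}^d(1+n_i)\le\lambda_n$, so that $\lambda_n$ grows at least like $\prod_i(1+n_i)$. Since all reasonable weights of this form generate the same space $s$, I will freely use the weight $\prod_i(1+n_i)$.

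First I would show that $\iota$ maps $\mathcal{S}(\RR^d_+)$ continuously into $s$. For $f\in\mathcal{S}(\RR^d_+)$ the operator $E$ maps $f$ into $\mathcal{S}(\RR^d_+)\subset L^2(\RR^d_+)$, so using the eigenvalue relation and the self-adjointness of $E$ one gets, for every $k$,
\[
\lambda_n^k\,a_n(f)=(-1)^{dk}\langle f,E^k\mathcal{L}_n\rangle=(-1)^{dk}\langle E^kf,\mathcal{L}_n\rangle .
\]
Cauchy--Schwarz and $\|\mathcal{L}_n\|_{L^2}=1$ then give $|a_n(f)|\le\lambda_n^{-k}\|E^kf\|_{L^2}$. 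Because $\lambda_n^{-1}$ decays like $\prod_i(1+n_i)^{-1}$, choosing $k$ large makes $\sum_n|a_n(f)|^2\prod_i(1+n_i)^{2m}\le\|E^kf\|_{L^2}^2\sum_n\lambda_n^{-2k}\prod_i(1+n_i)^{2m}$ finite for each $m$; hence $\{a_n(f)\}\in s$. As $f\mapsto\|E^kf\|_{L^2}$ is a continuous seminorm on $\mathcal{S}(\RR^d_+)$ (since $E^k$ is a differential operator with polynomial coefficients acting continuously on the space and the $L^2$-norm is dominated by finitely many defining seminorms), each $s$-seminorm of $\iota(f)$ is bounded by a continuous seminorm of $f$, which is continuity of $\iota$.

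Next I would treat the reconstruction map $\kappa(\{b_n\})=\sum_n b_n\mathcal{L}_n$ and show it is continuous from $s$ into $\mathcal{S}(\RR^d_+)$. Applying \eqref{bound on Laguerre} termwise,
\[
\sup_{x\in\RR^d_+}\bigl|x^kD^p\,\kappa(\{b_n\})(x)\bigr|\le C_{p,k}\sum_n|b_n|\prod_{i=1}^d(1+n_i)^{p_i+k_i},
\]
and a single Cauchy--Schwarz estimate against a high power of $\prod_i(1+n_i)$ bounds the right-hand side by a constant times an $s$-seminorm of $\{b_n\}$. This simultaneously shows the series defining $\kappa(\{b_n\})$ converges absolutely in every seminorm, hence in the $(F)$-space $\mathcal{S}(\RR^d_+)$, and that $\kappa$ is continuous. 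Specializing to $b_n=a_n(f)$, which lies in $s$ by the previous step, yields the absolute convergence of $\sum_n a_n(f)\mathcal{L}_n$ in $\mathcal{S}(\RR^d_+)$ asserted in the statement.

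It remains to identify the limit and to check that $\iota$ and $\kappa$ are mutually inverse. Because $\mathcal{S}(\RR^d_+)$ embeds continuously into $L^2(\RR^d_+)$, the $\mathcal{S}$-limit of $\sum_n a_n(f)\mathcal{L}_n$ coincides with its $L^2$-limit, and since $\{\mathcal{L}_n\}$ is an orthonormal basis of $L^2(\RR^d_+)$ this $L^2$-limit is $f$; thus $f=\sum_n a_n(f)\mathcal{L}_n$ and $\kappa\circ\iota=\id$. Conversely, for $\{b_n\}\in s$ the convergence of $\kappa(\{b_n\})$ in $\mathcal{S}\subset L^2$ permits computing its Laguerre coefficients termwise, giving $\langle\kappa(\{b_n\}),\mathcal{L}_m\rangle=b_m$ by orthonormality, i.e. $\iota\circ\kappa=\id$. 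Hence $\iota$ is a continuous bijection with continuous inverse $\kappa$, a topological isomorphism onto $s$. The step I expect to require the most care is the self-adjointness identity $\langle E^kf,\mathcal{L}_n\rangle=\langle f,E^k\mathcal{L}_n\rangle$ for $f\in\mathcal{S}(\RR^d_+)$: although $E$ is self-adjoint on its $L^2$-domain, one must verify that the repeated integration by parts producing it generates no boundary contributions at $x_i=0$ or $x_i=\infty$, which is exactly where the factor $x_i$ in $D_i(x_iD_i)$ and the rapid decay built into $\mathcal{S}(\RR^d_+)$ are used.
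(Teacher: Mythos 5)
Your proposal is correct and follows essentially the same route as the paper: membership of $\{a_n(f)\}_{n}$ in $s$ via the self-adjointness and the eigenvalue relation of $E$, and absolute convergence in $\mathcal{S}(\RR^d_+)$ together with continuity of the reconstruction map via the pointwise bound \eqref{bound on Laguerre}. The only (minor) difference is that you prove continuity of $\iota$ directly from the estimate $|a_n(f)|\le\lambda_n^{-k}\|E^kf\|_{L^2}$, whereas the paper obtains it from the open mapping theorem between the $(F)$-spaces $\mathcal{S}(\RR^d_+)$ and $s$.
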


\begin{proof}
For $f\in \mathcal{S}(\mathbb{R}^d_+)$
$$a_n(Ef)=\langle Ef,\mathcal{L}_n\rangle=\langle f,E(\mathcal{L}_n)\rangle=a_n(f)(-1)^d\prod_{i=1}^d\Big(n_i+\frac{1}{2}\Big).$$
\noindent Moreover,
\beas
a_n(E^pf)=a_n(f)\prod_{i=1}^d(-1)^{p_i}(n_i+\frac{1}{2})^{p_i}
\eeas
for any $p\in\mathbb{N}^d$. As
$E^pf\in\mathcal{S}(\mathbb{R}^d_+)\subset L^2(\mathbb{R}^d_+)$, we have

$$\sum_{n\in\mathbb{N}_0^d}|a_n(f)|^2\prod_{i=1}^d\Big(n_i+\frac{1}{2}\Big)^{2p_i}<\infty,\;\mbox{for every}\;p\in\mathbb{N}^d_0,$$ i.e. $\{a_n(f)\}_{n\in\NN^d_0}\in s$. Clearly $f=\sum_{n\in\mathbb{N}_0^d}a_n(f)\mathcal{L}_n$ as elements of $L^2(\mathbb{R}^d_+)$. By (\ref{bound on Laguerre}), we obtain
\begin{equation}\label{bound on Laguerre1}
\sum_{n\in\mathbb{N}_0^d}|x^kD^p(a_n(f)\mathcal{L}_n(x))|\leq C_{p,k}\sum_{n\in\mathbb{N}_0^d}|a_n(f)|\prod_{i=1}^d(n_i+1)^{p_i+k_i}<\infty
\end{equation}
which yields the absolute convergence of the series in $\mathcal{S}(\mathbb{R}^d_+)$.\\
\indent To prove that $\iota$ is topological isomorphism, first
observe that by the above consideration it is well defined and it is clearly an
injection. Let $\{a_n\}_{n\in\mathbb{N}_0^d}\in s$. Define
$f=\sum_{n\in\mathbb{N}_0^d}a_n\mathcal{L}_n\in
L^2(\mathbb{R}^d_+)$. Now (\ref{bound on Laguerre1}) proves that
this series converges in $\mathcal{S}(\mathbb{R}^d_+)$, hence
$f\in \mathcal{S}(\mathbb{R}^d_+)$. Thus $\iota$ is bijective.
Observe that, (\ref{bound on Laguerre1}) proves that $\iota^{-1}$
is continuous. Since $\mathcal{S}(\mathbb{R}^d_+)$ and $s$ are
$(F)$-spaces, the open mapping theorem proves that $\iota$ is
topological isomorphism.
\end{proof}

\subsection{Convergence of the Laguerre series in $\mathcal{S}'(\mathbb{R}^d_+)$}

\begin{thm}\label{Thm Konvergencija S'}
For $T\in\mathcal{S}'(\mathbb{R}^d_+)$, let $b_n(T)=\langle
T,\mathcal{L}_n\rangle$. Then
$T=\sum_{n\in\mathbb{N}_0^d}b_n(T)\mathcal{L}_n$ and
$\{b_n(T)\}_{n\in\mathbb{N}_0^d}\in s'$. The series converges
absolutely in $\mathcal{S}'(\mathbb{R}^d_+)$. Conversely, if
$\{b_n\}_{n\in\mathbb{N}_0^d}\in s'$, then there exists
$T\in\mathcal{S}'(\mathbb{R}^d_+)$ such that
$T=\sum_{n\in\mathbb{N}_0^d}b_n\mathcal{L}_n$. As a consequence,
$\mathcal{S}'(\mathbb{R}^d_+)$ is topologically isomorphic to
$s'$.
\end{thm}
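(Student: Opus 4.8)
The plan is to dualize Theorem~\ref{Thm Konvergencija S}. Since that theorem establishes a topological isomorphism $\iota:\mathcal{S}(\mathbb{R}^d_+)\to s$ via the Laguerre coefficients, and since $s'$ is by definition the strong dual of $s$, the natural approach is to transport the duality pairing through $\iota$. First I would recall the classical fact that the sequence space $s$ is reflexive and that its strong dual is precisely the space $s'$ of slowly increasing sequences under the bilinear pairing $\langle\{b_n\},\{a_n\}\rangle=\sum_n b_n a_n$; the convergence $\sum_n|b_n a_n|<\infty$ whenever $\{a_n\}\in s$ and $\{b_n\}\in s'$ follows from Cauchy--Schwarz after splitting $b_n a_n = (b_n n^{-k})(a_n n^{k})$ with $k$ chosen so that $\{b_n n^{-k}\}\in\ell^2$.

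Next I would argue the forward direction. Given $T\in\mathcal{S}'(\mathbb{R}^d_+)$, the coefficients $b_n(T)=\langle T,\mathcal{L}_n\rangle$ are well-defined because each $\mathcal{L}_n\in\mathcal{S}(\mathbb{R}^d_+)$. To see that $\{b_n(T)\}\in s'$, I would use continuity of $T$: there is a seminorm bound $|\langle T,\varphi\rangle|\le C\sup_{k,p\le N,\,x}|x^k D^p\varphi(x)|$, and applying this to $\varphi=\mathcal{L}_n$ together with the bound (\ref{bound on Laguerre}) gives $|b_n(T)|\le C'\prod_i(n_i+1)^{2N}$, which is exactly slow increase, i.e.\ $\{b_n(T)\}\in s'$. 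For any $f\in\mathcal{S}(\mathbb{R}^d_+)$ with coefficients $a_n(f)$, I would then compute $\langle T,f\rangle=\langle T,\sum_n a_n(f)\mathcal{L}_n\rangle=\sum_n a_n(f)b_n(T)$, where interchanging $T$ with the sum is justified because the Laguerre series of $f$ converges to $f$ in $\mathcal{S}(\mathbb{R}^d_+)$ by Theorem~\ref{Thm Konvergencija S} and $T$ is continuous. This identity says precisely that $T=\sum_n b_n(T)\mathcal{L}_n$ in $\mathcal{S}'(\mathbb{R}^d_+)$, and since the pairing $\langle T,f\rangle$ equals $\langle\{b_n(T)\},\{a_n(f)\}\rangle$, the absolute convergence in the weak-$*$ sense is immediate; I would upgrade this to absolute convergence in the strong topology by testing against a bounded set $B\subset\mathcal{S}(\mathbb{R}^d_+)$, whose image $\iota(B)$ is bounded in $s$, and again applying the Cauchy--Schwarz splitting uniformly over $B$.

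For the converse, given $\{b_n\}\in s'$, I would define the functional $T$ on $\mathcal{S}(\mathbb{R}^d_+)$ by $\langle T,f\rangle=\sum_n b_n a_n(f)$; the sum converges absolutely by the pairing estimate, and continuity of $T$ follows because $f\mapsto\{a_n(f)\}$ is continuous into $s$ (Theorem~\ref{Thm Konvergencija S}) and the pairing with a fixed element of $s'$ is a continuous linear form on $s$. Thus $T\in\mathcal{S}'(\mathbb{R}^d_+)$ with $b_n(T)=b_n$, giving $T=\sum_n b_n\mathcal{L}_n$. Finally, the map $T\mapsto\{b_n(T)\}$ is the transpose of $\iota^{-1}$, hence a topological isomorphism $\mathcal{S}'(\mathbb{R}^d_+)\to s'$ when both carry their strong dual topologies, since the transpose of a topological isomorphism between $(F)$-spaces (or reflexive spaces) is again one.

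The main obstacle I anticipate is not the algebraic identities but the topological bookkeeping: upgrading the weak-$*$ convergence of the series to \emph{absolute} convergence in the strong dual topology of $\mathcal{S}'(\mathbb{R}^d_+)$, and verifying that the transpose of $\iota$ is a homeomorphism for the strong topologies rather than merely a bijection. Both rely on controlling the coefficient estimates uniformly over bounded subsets of $\mathcal{S}(\mathbb{R}^d_+)$, so the careful point is to quantify how the index $k$ in the slow-increase condition interacts with the boundedness constants from $\iota(B)\subset s$; once the uniform Cauchy--Schwarz splitting is in place, everything else is routine.
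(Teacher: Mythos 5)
Your proposal is correct and follows essentially the same route as the paper: dualizing Theorem~\ref{Thm Konvergencija S}, identifying the coefficient map on $\mathcal{S}'(\mathbb{R}^d_+)$ with the (inverse) transpose of $\iota$, and obtaining absolute convergence in the strong dual topology via a Cauchy--Schwarz splitting applied uniformly over bounded subsets $B$ with $\iota(B)$ bounded in $s$. The only cosmetic difference is that you verify $\{b_n(T)\}\in s'$ directly from a seminorm estimate and the bound (\ref{bound on Laguerre}), whereas the paper reads this off from the transpose isomorphism.
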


\begin{proof}
Let $\{b_n\}_{n\in\mathbb{N}_0^d}\in s'$. There exists $k\in
\mathbb{N}$ such that $\sum_{n\in\mathbb{N}_0^d} |b_n|^2
(|n|+1)^{-2k}<\infty$. For a bounded subset $B$ of
$\mathcal{S}(\mathbb{R}^d_+)$, Theorem \ref{Thm Konvergencija S}
implies that there exists $C>0$ such that
$$\sum_{n\in\mathbb{N}_0^d}|a_n(f)|^2(|n|+1)^{2k}\leq C,\,\, \forall f\in B,$$
where we denote $\{a_n(f)\}_{n\in\mathbb{N}_0^d}=\iota(f)$. Observe that for arbitrary $q\in \mathbb{N}$ we have
$$\sum_{|n|\leq q}\sup_{f\in B}|\langle b_n\mathcal{L}_n,f\rangle|\leq \sup_{f\in B}
\sum_{n\in\mathbb{N}_0^d}\sum_{m\in\mathbb{N}_0^d}|\langle b_n\mathcal{L}_n,a_m(f)\mathcal{L}_m\rangle|=
\sup_{f\in B} \sum_{n\in\mathbb{N}_0^d} |b_n||a_n(f)|\leq C',$$
i.e.
\beas
\sum_{n\in\mathbb{N}_0^d}\sup_{f\in B}|\langle b_n\mathcal{L}_n,f\rangle|<\infty,
\eeas
hence $\sum_{n\in\mathbb{N}_0^d}b_n\mathcal{L}_n$ converges absolutely in $\mathcal{S}'(\mathbb{R}^d_+)$.\\
\indent Let $T\in\mathcal{S}'(\mathbb{R}^d_+)$. Theorem \ref{Thm
Konvergencija S} implies that ${}^t\iota: s'\rightarrow
\mathcal{S}'(\mathbb{R}^d_+)$ is an isomorphism (${}^t\iota$
denotes the transpose of $\iota$). Now, one easily verifies that
$({}^t\iota)^{-1}T=\{b_n\}_{n\in\mathbb{N}_0^d}$, where
$b_n(T)=\langle T,\mathcal{L}_n\rangle$. Observe that for $f\in
\mathcal{S}(\mathbb{R}^d_+)$
$$\langle T, f\rangle =\sum_{n\in\mathbb{N}_0^d}a_n(f)\langle T,\mathcal{L}_n\rangle =\sum_{n\in\mathbb{N}_0^d}a_n(f)b_n(T)=\left\langle \sum_{n\in\mathbb{N}_0^d}b_n(T)\mathcal{L}_n,f\right\rangle,$$
i.e. $T=\sum_{n\in\mathbb{N}_0^d}b_n(T)\mathcal{L}_n$.
\end{proof}

 \section{Kernel theorem}

The completions of the tensor product are denoted by
$\hat{\otimes}_\epsilon$ and $\hat{\otimes}_\pi$ with respect to
$\epsilon$ and $\pi$ topologies. If they are equal we drop the
subindex.

\begin{prop}\label{Prop nuc}
The spaces $\SSS(\RR^d_+)$ and $\SSS'(\RR^d_+)$ are nuclear.
\end{prop}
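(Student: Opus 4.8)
The plan is to transport nuclearity from the sequence space $s$, using the isomorphisms already established, rather than working intrinsically with $\SSS(\RR^d_+)$. First I would recall the classical fact that $s$, the space of rapidly decreasing sequences, is a nuclear $(F)$-space. This follows from Grothendieck's criterion: writing $s$ as the reduced projective limit of the weighted Hilbert spaces $\ell^2_k=\{(a_n)_{n\in\NN^d_0}:\sum_{n\in\NN^d_0}|a_n|^2(|n|+1)^{2k}<\infty\}$, one checks that for each $k$ the canonical linking map $\ell^2_{k+m}\to\ell^2_k$ is Hilbert--Schmidt as soon as $\sum_{n\in\NN^d_0}(|n|+1)^{-2m}<\infty$, which holds once $m$ is large enough. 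Since every Hilbert--Schmidt operator is nuclear, $s$ is nuclear.

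Next I would invoke the fact that nuclearity is invariant under topological isomorphism. Theorem \ref{Thm Konvergencija S} provides a topological isomorphism $\iota:\SSS(\RR^d_+)\to s$, so $\SSS(\RR^d_+)$ is nuclear at once.

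For the dual space I would appeal to the permanence theorem that the strong dual of a nuclear $(F)$-space is again nuclear. By the last assertion of Theorem \ref{Thm Konvergencija S'}, the transpose ${}^t\iota$ furnishes a topological isomorphism $\SSS'(\RR^d_+)\cong s'$, where $s'$ carries its strong dual topology; hence $\SSS'(\RR^d_+)$ inherits nuclearity from $s'$.

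The only delicate point — and the one I would flag as the main, though mild, obstacle — is to ensure that the identification at the level of duals is a topological isomorphism for the strong topologies, and not merely an algebraic bijection, so that the permanence result genuinely applies. This is exactly what the final statement of Theorem \ref{Thm Konvergencija S'} secures. With that in hand, the whole proposition reduces to citing the standard structure theory of nuclear spaces, and no further computation is needed.
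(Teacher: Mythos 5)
Your proposal is correct and follows essentially the same route as the paper: transport nuclearity from the nuclear $(F)$-space $s$ via the isomorphism $\iota$ of Theorem \ref{Thm Konvergencija S}, then obtain nuclearity of $\SSS'(\RR^d_+)$ from the permanence theorem for strong duals of nuclear $(F)$-spaces. The extra details you supply (the Hilbert--Schmidt linking maps for $s$, the identification $\SSS'(\RR^d_+)\cong s'$ via ${}^t\iota$) are correct but not needed beyond what the paper's two-line proof already uses.
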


\begin{proof}
Since $s$ is nuclear Theorem \ref{Thm Konvergencija S} implies that $\SSS(\RR^d_+)$ is also nuclear. Now $\SSS'(\RR^d_+)$ is nuclear as the strong dual of a nuclear $(F)$-space.
\end{proof}

\begin{thm}\label{kernel thm}
The following canonical isomorphisms hold:

$$\mathcal{S}(\mathbb{R}^m_+)\hat{\otimes}\mathcal{S}(\mathbb{R}^n_+)\cong
\mathcal{S}(\mathbb{R}^{m+n}_+),\,\,\, \mathcal{S}'(\mathbb{R}^m_+)\hat{\otimes}\mathcal{S}'(\mathbb{R}^n_+)\cong
\mathcal{S}'(\mathbb{R}^{m+n}_+).$$

\end{thm}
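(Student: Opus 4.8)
The plan is to transport both isomorphisms to the sequence spaces $s$ and $s'$, where the completed tensor product can be computed by hand, and then to carry them back. By Proposition \ref{Prop nuc} all the spaces involved are nuclear, so throughout $\hat{\otimes}_\epsilon=\hat{\otimes}_\pi$ and we may write $\hat{\otimes}$ without a subscript. For the first isomorphism, Theorem \ref{Thm Konvergencija S} supplies topological isomorphisms $\iota_m:\mathcal{S}(\mathbb{R}^m_+)\to s$, $\iota_n:\mathcal{S}(\mathbb{R}^n_+)\to s$ and $\iota_{m+n}:\mathcal{S}(\mathbb{R}^{m+n}_+)\to s$, where the three copies of $s$ carry the index sets $\mathbb{N}_0^m$, $\mathbb{N}_0^n$ and $\mathbb{N}_0^{m+n}$ respectively. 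Since $\hat{\otimes}$ is functorial for topological isomorphisms, $\iota_m\hat{\otimes}\iota_n$ identifies $\mathcal{S}(\mathbb{R}^m_+)\hat{\otimes}\mathcal{S}(\mathbb{R}^n_+)$ with $s(\mathbb{N}_0^m)\hat{\otimes}s(\mathbb{N}_0^n)$, and the whole problem reduces to the single sequence-space statement $s(\mathbb{N}_0^m)\hat{\otimes}s(\mathbb{N}_0^n)\cong s(\mathbb{N}_0^{m+n})$.

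The heart of the matter is this last isomorphism. A finitely supported double sequence $\{c_{j,k}\}_{j\in\mathbb{N}_0^m,\,k\in\mathbb{N}_0^n}$, representing a general element of the algebraic tensor product, is sent to the sequence $\{c_{j,k}\}_{(j,k)\in\mathbb{N}_0^{m+n}}$ on the product index set. The space $s$ is a nuclear Köthe space with defining weights $(|j|+1)^p$, $p\in\mathbb{N}_0$ (as used in Theorem \ref{Thm Konvergencija S'}), and the standard description of the completed tensor product of two nuclear Köthe spaces shows that $s(\mathbb{N}_0^m)\hat{\otimes}s(\mathbb{N}_0^n)$ is the Köthe space on $\mathbb{N}_0^{m+n}$ with product weights $(|j|+1)^p(|k|+1)^p$. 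The elementary two-sided estimate
$$|j|+|k|+1\leq(|j|+1)(|k|+1)\leq(|j|+|k|+1)^2$$
shows that these product weights are equivalent to the weights $(|(j,k)|+1)^p=(|j|+|k|+1)^p$ defining $s(\mathbb{N}_0^{m+n})$, so the displayed map extends to a topological isomorphism of the completions. Composing back, $\iota_{m+n}^{-1}$ turns it into the canonical map $f\otimes g\mapsto f(x)g(y)$: indeed, since $\mathcal{L}_{(j,k)}(x,y)=\mathcal{L}_j(x)\mathcal{L}_k(y)$, the Laguerre coefficients of $f(x)g(y)$ are exactly $a_j(f)a_k(g)$, so the abstract isomorphism is the canonical one claimed.

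For the second isomorphism I would dualize. Taking the transpose of the first isomorphism and using that the strong dual of the completed tensor product of two nuclear $(F)$-spaces is canonically the completed tensor product of their strong duals, one obtains
$$\mathcal{S}'(\mathbb{R}^{m+n}_+)\cong\big(\mathcal{S}(\mathbb{R}^m_+)\hat{\otimes}\mathcal{S}(\mathbb{R}^n_+)\big)'\cong\mathcal{S}'(\mathbb{R}^m_+)\hat{\otimes}\mathcal{S}'(\mathbb{R}^n_+).$$
Equivalently, one may repeat the sequence-space argument verbatim with $s'$ in place of $s$, now using the slowly increasing weights of Theorem \ref{Thm Konvergencija S'} together with the same weight equivalence.

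The main obstacle is the concrete computation underlying $s(\mathbb{N}_0^m)\hat{\otimes}s(\mathbb{N}_0^n)\cong s(\mathbb{N}_0^{m+n})$: one must pin down the tensor-product topology on the sequence side and verify the weight equivalence above. Everything else — functoriality of $\hat{\otimes}$, the identity $\hat{\otimes}_\epsilon=\hat{\otimes}_\pi$ coming from nuclearity, and the duality between completed tensor products and strong duals — is standard structure theory that can be quoted.
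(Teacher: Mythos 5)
Your proposal is correct, but it takes a genuinely different route from the paper. The paper never leaves the function spaces: it uses Theorem \ref{Thm Konvergencija S} only to get density of $\mathcal{S}(\mathbb{R}^m_+)\otimes\mathcal{S}(\mathbb{R}^n_+)$ in $\mathcal{S}(\mathbb{R}^{m+n}_+)$, and then sandwiches the induced topology between $\pi$ and $\epsilon$ --- the $\pi$ topology is finer because the canonical bilinear map $(f,g)\mapsto f\otimes g$ is separately continuous, hence continuous on a product of $(F)$-spaces; the $\epsilon$ topology is coarser by an explicit seminorm estimate $|\langle T_x\otimes F_y,\chi\rangle|\leq C^2\|\chi\|_{(k',k),(p',p)}$ against equicontinuous sets of the duals. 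Nuclearity forces $\pi=\epsilon$, so all three topologies agree and the completion is identified. You instead push everything through the isomorphisms $\iota$ onto the sequence side and invoke the classical description of the completed tensor product of nuclear K\"othe echelon spaces, reducing the theorem to the weight equivalence between $(|j|+1)^p(|k|+1)^p$ and $(|j|+|k|+1)^p$, which you verify correctly. What your route buys is transparency of the canonical identification (the Laguerre coefficients of $f(x)g(y)$ are visibly $a_j(f)a_k(g)$, which the paper leaves implicit) and a reduction to the well-known fact $s\hat{\otimes}s\cong s$; what it costs is that the entire weight of the proof now rests on the quoted K\"othe-space tensor product description, which you rightly flag as the step that would have to be written out, whereas the paper's sandwich argument is self-contained modulo nuclearity. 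The dualization you use for the second isomorphism is exactly the paper's first line, so that part coincides.
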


\begin{proof}
The second isomorphism follows from the first since $\SSS(\RR^d_+)$ is a nuclear $(F)$-space. Thus it is enough to prove the first isomorphism.\\
Step 1: From Theorem \ref{Thm Konvergencija S} follows that
$\mathcal{S}(\mathbb{R}^m_+)\otimes\mathcal{S}(\mathbb{R}^n_+)$ is
dense in $\mathcal{S}(\mathbb{R}^{m+n}_+).$ It suffices to show
that the latter induces on the former the topology $\pi=\epsilon$
(the $\pi$ and the $\epsilon$ topologies are the same because
$\mathcal{S}(\mathbb{R}^d_+)$ is nuclear). Since the bilinear
mapping $(f,g)\mapsto f\otimes g$ of
$\mathcal{S}(\mathbb{R}^m_+)\times \mathcal{S}(\mathbb{R}^n_+)$
into $\mathcal{S}(\mathbb{R}^{m+n}_+)$ is separately continuous it
follows that it is continuous ($\mathcal{S}(\mathbb{R}^m_+)$ and $\mathcal{S}(\mathbb{R}^n_+)$ are $(F)$-spaces). The continuity of this bilinear
mapping proves that the inclusion
$\mathcal{S}(\mathbb{R}^m_+)\otimes_\pi
\mathcal{S}(\mathbb{R}^n_+)\rightarrow
\mathcal{S}(\mathbb{R}^{m+n}_+)$ is continuous, hence the topology
$\pi$ is stronger than the induced one from
$\mathcal{S}(\mathbb{R}^{m+n}_+)$ onto
$\mathcal{S}(\mathbb{R}^m_+)\otimes \mathcal{S}(\mathbb{R}^n_+)$.

Step 2: Let $A'$ and $B'$ be equicontinuous subsets of
$\mathcal{S}'(\mathbb{R}^m_+)$ and $\mathcal{S}'(\mathbb{R}^n_+)$,
respectively. There exist $C>0$ and $j,l\in \mathbb{N}$ such that
such that
$$\sup_{T\in A'}|\langle T,\varphi\rangle|\leq
C\|\varphi\|_{j,l}\quad\mbox{and}\quad\sup_{F\in B'}|\langle
F,\psi\rangle|\leq C\|\psi\|_{j,l},$$
\noindent where
\begin{equation}\label{semin}
\|f\|_{j,l}=\sup_{\substack{|k|\leq j\\ |p|\leq l}}\sup_{x\in\mathbb{R}^d_+}|x^kD^pf(x)|<\infty.
\end{equation}
\noindent For all $T\in A'$ and $F\in B'$ we have
\begin{eqnarray*}
& & |\langle T_x\otimes F_y,\chi(x,y)\rangle| = |\langle
F_y,\langle T_x,\chi(x,y)\rangle\rangle|\leq
C\sup_{\substack{|k|\leq j\\ |p|\leq
l}}\sup_{y\in\mathbb{R}^n_+}|y^{k}\langle T_x,
D^{p}_y\chi(x,y)\rangle|\\ & & \leq C^2\sup_{\substack{|k|\leq j\\
|p|\leq l}}\sup_{\substack{|k'|\leq j\\ |p'|\leq
l}}\sup_{\substack{x\in\mathbb{R}^m_+\\
y\in\mathbb{R}^n_+}}|x^{k'} y^{k}D^{p'}_xD^{p}_y\chi(x,y)|\\ & &
\leq C^2\|\chi(x,y)\|_{(k',k),(p',p)},\,\,\forall \chi\in
\mathcal{S}(\mathbb{R}^m_+)\otimes \mathcal{S}(\mathbb{R}^n_+).
\end{eqnarray*}
\noindent It follows that the $\epsilon$ topology on
$\mathcal{S}(\mathbb{R}^m_+)\otimes \mathcal{S}(\mathbb{R}^n_+)$
is weaker than the induced one from
$\mathcal{S}(\mathbb{R}^{m+n}_+)$.
\end{proof}

As a consequence of this theorem we have the following important

\begin{thm}\label{repofsupp}
The restriction mapping $f\mapsto f_{|\RR^d_+}$, $\SSS(\RR^d)\rightarrow \SSS(\RR^d_+)$ is a topological homomorphism onto.\\
\indent The space $\SSS(\RR^d_+)$ is topologically isomorphic to the quotient space $\SSS(\RR^d)/N$, where $N=\{f\in\SSS(\RR^d)|\, \mathrm{supp}\, f\subseteq \RR^d\backslash \RR^d_+\}$. Consequently, $\SSS'(\RR^d_+)$ can be identified with the closed subspace of $\SSS'(\RR^d)$ which consists of all tempered distributions with support in $\overline{\RR^d_+}$.
\end{thm}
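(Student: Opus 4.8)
The plan is to reduce the whole statement to the one-dimensional extension problem and then transport it to the quadrant by the kernel theorem. First I would note that $r_d:\SSS(\RR^d)\to\SSS(\RR^d_+)$, $f\mapsto f_{|\RR^d_+}$, is well defined and continuous: the restriction of a Schwartz function to the open quadrant lies in $\SSS(\RR^d_+)$, and each seminorm satisfies $\|f_{|\RR^d_+}\|_{j,l}\le\|f\|_{j,l}$ because the supremum is now taken over the smaller set $\RR^d_+\subseteq\RR^d$. Everything then hinges on producing a continuous linear right inverse for $r_d$.

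For $d=1$ I would invoke a Seeley-type extension (Whitney \cite{Whitney}, Seeley \cite{Seeley}, H\"ormander \cite[Thm.~2.3.6]{Hermander}): a continuous linear operator $E_1:\SSS(\RR_+)\to\SSS(\RR)$ with $r_1E_1=\id$, i.e. $(E_1g)_{|\RR_+}=g$. The only point to check is that the classical construction stays inside the Schwartz class, which it does since the reflected series defining $E_1g$ on $(-\infty,0)$ only samples $g$ on $\RR_+$ through a compactly supported cutoff, so $E_1g$ is again rapidly decreasing. The decisive step, and the reason for the title, is to tensor this up: $E_1$ is a continuous linear map between nuclear $(F)$-spaces, hence $E_1^{\otimes d}$ is continuous for the projective topology and extends to a continuous map $\SSS(\RR_+)\hat\otimes\cdots\hat\otimes\SSS(\RR_+)\to\SSS(\RR)\hat\otimes\cdots\hat\otimes\SSS(\RR)$. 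By Theorem \ref{kernel thm} (and its classical analogue for $\SSS(\RR^d)$) these completed tensor products are exactly $\SSS(\RR^d_+)$ and $\SSS(\RR^d)$, so I obtain a continuous $E_d:\SSS(\RR^d_+)\to\SSS(\RR^d)$. On the dense subspace spanned by products one computes $r_dE_d(g_1\otimes\cdots\otimes g_d)=\prod_{i}(E_1g_i)_{|\RR_+}=\prod_i g_i$, whence $r_dE_d=\id$ by density and continuity; in particular $r_d$ is onto.

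Since $E_d$ is a continuous linear section, $r_d$ is a topological homomorphism onto (equivalently, apply the open mapping theorem, both spaces being $(F)$-spaces), and the sequence $0\to N\to\SSS(\RR^d)\to\SSS(\RR^d_+)\to0$ splits. Its kernel is $\{f:\ f_{|\RR^d_+}=0\}$, which by continuity of all derivatives equals $\{f:\ \supp f\subseteq\RR^d\setminus\RR^d_+\}=N$; hence the induced map $\SSS(\RR^d)/N\to\SSS(\RR^d_+)$ is a topological isomorphism.

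For the last assertion I would dualize: $\SSS'(\RR^d_+)\cong(\SSS(\RR^d)/N)'=N^{\perp}$, the annihilator of $N$, a closed subspace of $\SSS'(\RR^d)$. It remains to identify $N^{\perp}$ with $\{T\in\SSS'(\RR^d):\ \supp T\subseteq\overline{\RR^d_+}\}$. The inclusion $\subseteq$ is immediate, since every $\vphi\in\DD(\RR^d\setminus\overline{\RR^d_+})$ belongs to $N$, so a functional in $N^{\perp}$ annihilates all test functions off $\overline{\RR^d_+}$. For the reverse inclusion, if $\supp T\subseteq\overline{\RR^d_+}$ and $f\in N$, then $f$ vanishes to infinite order on $\overline{\RR^d_+}\supseteq\supp T$ (its derivatives are continuous and vanish on the open quadrant); since every tempered distribution has finite order, a Whitney-type vanishing lemma gives $\langle T,f\rangle=0$, so $T\in N^{\perp}$. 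I expect this last point --- that a finite-order distribution kills functions flat on its support --- to be the only genuinely delicate step, and I would settle it by approximating $f$ in $\SSS(\RR^d)$ by functions vanishing on a neighbourhood of $\overline{\RR^d_+}$.
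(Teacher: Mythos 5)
Your proposal is correct, but it reaches surjectivity by a different mechanism than the paper. Both arguments use the kernel theorem to reduce the $d$-dimensional problem to the line, but where you construct a continuous linear \emph{section}: a Seeley-type extension operator $E_1:\SSS(\RR_+)\to\SSS(\RR)$ (whose membership in the Schwartz class you rightly flag as the point to check; it holds because the reflected part is supported in a fixed compact set near the origin), and then tensor it to $E_d=E_1^{\hat\otimes d}:\SSS(\RR^d_+)\to\SSS(\RR^d)$ with $r_dE_d=\id$ by density, the paper instead argues by induction on $d$: it quotes Duran for the surjectivity of $R_1$, upgrades $R_d$ and $R_1$ to topological homomorphisms onto via the open mapping theorem, and invokes K\"othe's theorem that the completed $\pi$-tensor product of two surjective homomorphisms between $(F)$-spaces is again surjective, so $R_{d+1}=R_d\hat\otimes_\pi R_1$ is onto. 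Your route buys strictly more (the exact sequence $0\to N\to\SSS(\RR^d)\to\SSS(\RR^d_+)\to 0$ splits continuously), at the cost of verifying the Schwartz-class bounds for Seeley's operator; the paper's route needs no explicit operator but leans on two external citations. For the dual statement the paper cites Morimoto's duality theorem for $(FS)$-spaces to get $(\SSS(\RR^d)/N)'_b\cong N^{\perp}$ topologically and simply \emph{asserts} that $N^{\perp}$ is the set of tempered distributions supported in $\overline{\RR^d_+}$; you actually supply the missing argument for the nontrivial inclusion, and your suggested approximation is the cleanest way to finish it: for $f\in N$ the translates $f(\cdot+\eps(1,\dots,1))$ vanish on a neighbourhood of $\overline{\RR^d_+}$ and converge to $f$ in $\SSS(\RR^d)$, so any $T$ supported in $\overline{\RR^d_+}$ kills $f$, with no appeal to finite order needed. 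Just make sure you also record that the identification $\SSS'(\RR^d_+)\cong N^{\perp}$ is topological (strong duals), which is where the $(FS)$-space duality result is genuinely used.
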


\begin{proof} Obviously, the restriction mapping $f\mapsto f_{|\RR^d_+}$, $\SSS(\RR^d)\rightarrow \SSS(\RR^d_+)$ is continuous.
We prove its surjectivity by induction on $d$. For clarity, denote the $d$-dimensional restriction by $R_d$. For $d=1$, the
surjectivity of $R_1$ is proved in \cite[p. 168]{D}. Assume that $R_d$ is surjective. By the open mapping theorem, $R_d$ and
$R_1$ are topological homomorphisms onto since all the underlying spaces are $(F)$-spaces. By the above theorem
$R_d\hat{\otimes}_{\pi} R_1$ is continuous mapping from $\SSS(\RR^{d+1})$ to
$\SSS(\RR^{d+1}_+)$ ($\SSS(\RR^d)\hat{\otimes}\SSS(\RR)\cong \SSS(\RR^{d+1})$ by the Schwartz kernel theorem).
Clearly $R_d\hat{\otimes}_{\pi} R_1=R_{d+1}$. As $\SSS(\RR^{d+1})$ and $\SSS(\RR^{d+1}_+)$ are $(F)$-spaces
\cite[Theorem 7, p. 189]{kothe2} implies that $R_{d+1}$ is also surjective.\\
\indent The surjectivity of the restriction mapping together with the open mapping theorem implies that it is homomorphism.
Clearly $N$ is closed subspace of $\SSS(\RR^d)$ and $\mathrm{ker}\, R_d=N$. Thus $R_d$ induces natural topological isomorphism
between $\SSS(\RR^d)/N$ and $\SSS(\RR^d_+)$. Hence $\left(\SSS(\RR^d)/N\right)'_b$ is topologically isomorphic to
$\SSS'(\RR^d_+)$ (the index $b$ stands for the strong dual topology). Since $\SSS(\RR^d)$ is an $(FS)$-space,
\cite[Theorem A.6.5, p. 255]{morimoto} implies that $\left(\SSS(\RR^d)/N\right)'_b$ is topologically isomorphic to the closed
subspace
\beas
N^{\perp}=\{T\in\SSS'(\RR^d)|\, \langle T, f\rangle=0,\,\, \forall f\in N\}
\eeas
of $\SSS'(\RR^d)$ which is exactly the
subspace of all tempered distributions with support in $\overline{\RR^d_+}$.
\end{proof}

Given $f,g\in\SSS'(\RR^d_+)$, Theorem \ref{repofsupp} implies that we can consider them as elements of $\SSS'(\RR^d)$ with
support in $\overline{\RR^d_+}$. Now, one easily verifies that for each $\varphi\in\SSS(\RR^d)$,
\beas
(f(x)\otimes g(y))\varphi(x+y)\in\DD'_{L^1}(\RR^{2d}),
\eeas
hence the $\SSS'$-convolution of $f$ and $g$ exists (see \cite[p. 26]{shiraishi}). Also, if
$\mathrm{supp}\,\varphi\cap\overline{\RR^d_+}=\emptyset$, then $(f(x)\otimes g(y))\varphi(x+y)=0$, hence $\mathrm{supp}\, f*g
\subseteq \overline{\RR^d_+}$, i.e. $f*g\in\SSS'(\RR^d_+)$. Thus
\beas
\langle f\ast g,\varphi\rangle=\langle f(x)\otimes g(y),\varphi(x+y)\rangle,\,\, \varphi\in \SSS(\mathbb{R}^d_+)
\eeas
(observe that
the function $\varphi^{\Delta}(x,y)=\varphi(x+y)$ is an element of $\SSS(\RR^{2d}_+)$).\\

\begin{rem}(\cite{D}, Remark 3.7 for d=1) Let us show that
$\mathcal{S}'(\mathbb{R}_+^d)$ is a convolution algebra.
Given $f,g\in\mathcal{S}'(\mathbb{R}_+^d)$, we compute the $n$-th
Laguerre coefficient of $f\ast g$ if $a_n=\langle
f,\mathcal{L}_n\rangle$ and $b_n=\langle g,\mathcal{L}_n\rangle$
then

$$\langle f\ast g, \mathcal{L}_n(t)\rangle=\langle f(x)\otimes
g(y),\mathcal{L}_n(x+y)\rangle.$$

\noindent Now,
$L_n^1(x+y)=\sum_{k=0}^nL_{n-k}(x)L_k(y)\;\mbox{and}\;L_n(t)=L_n^1(t)-L_{n-1}^1(t)$
(see \cite{Ed}, p. 192), where
$L_n^1(x)=\sum_{k=0}^n\binom{n+1}{n-k}((-x)^k/k!)$. In order to
simplify the proof, we consider the case $d=2$. Then

\begin{eqnarray*}
& & \langle f\ast g,\mathcal{L}_n(t)\rangle=\langle f(x)\otimes
g(y),\prod_{i=1}^2(\mathcal{L}_{n_i}^1(x_i+y_i)-\mathcal{L}_{n_i-1}^1(x_i+y_i))\rangle\\
& & =\langle f(x)\otimes
g(y),\prod_{i=1}^2\Big(\sum_{k_i=0}^{n_i}\mathcal{L}_{n_i-k_i}(x_i)\mathcal{L}_{k_i}(y_i)
-\sum_{k_i=0}^{n_i-1}\mathcal{L}_{n_i-k_i-1}(x_i)\mathcal{L}_{k_i}(y_i)\Big)\rangle\\
& & =\langle
f(x)g(y),\sum_{k\leq(n_1,n_2)}\mathcal{L}_{(n_1,n_2)-k}(x)\mathcal{L}_k(y)-\sum_{k\leq(n_1-1,n_2)}\mathcal{L}_{(n_1-1,n_2)-k}(x)\mathcal{L}_k(y)\\
& &
\quad-\sum_{k\leq(n_1,n_2-1)}\mathcal{L}_{(n_1,n_2-1)-k}(x)\mathcal{L}_k(y)+\sum_{k\leq(n_1-1,n_2-1)}\mathcal{L}_{(n_1-1,n_2-1)-k}(x)\mathcal{L}_k(y)\rangle\\
& & =
\sum_{k\leq(n_1,n_2)}a_{(n_1,n_2)-k}b_k-\sum_{k\leq(n_1-1,n_2)}a_{(n_1-1,n_2)-k}b_k\\
& &
\quad-\sum_{k\leq(n_1,n_2-1)}a_{(n_1,n_2-1)-k}b_k+\sum_{k\leq(n_1-1,n_2-1)}a_{(n_1-1,n_2-1)-k}b_k,
\end{eqnarray*}

\noindent where $a_n$ or $b_n$ equals zero if some component of
the subindex $n $ is less than zero. It is easy to verify that if
$(a_n)_{n\in\mathbb{N}^2}\in s'$ and $(b_n)_{n\in\mathbb{N}^2}\in
s'$ then $\langle f\ast g,\mathcal{L}_n(t)\rangle\in s'$.

\end{rem}

 \subsection*{Acknowledgement}

The paper was supported by the projects {\it Modelling and
harmonic analysis methods and PDEs with singularities}, No. 174024
financed by the Ministry of Science, Republic of Serbia.


\end{document}